\theoremstyle{plain}
\newtheorem{thm}{Theorem}
\newtheorem{lem}[thm]{Lemma}
\newtheorem{prop}[thm]{Proposition}
\newtheorem{conj}[thm]{Conjecture}
\theoremstyle{definition}
\newtheorem{defn}[thm]{Definition}
\newcommand{\girth}{\text{girth}}
\begin{document}

\title{Cycles in graphs of fixed girth with large size}

\author{J\'{o}zsef Solymosi\thanks{Research was supported by NSERC, ERC-AdG. 321104, and OTKA NK 104183 grants.} , Ching Wong\thanks{Research was supported by FYF (UBC).}}

\date{}
 
\maketitle

\begin{abstract}
Consider a family of graphs having a fixed girth and a large size. We give an optimal lower asymptotic bound on the number of even cycles of any constant length, as the order of the graphs tends to infinity.
\end{abstract}

\section{Introduction}

All graphs we consider in this article are simple graphs. We denote the size of $G$ by $e(G)$ and the order of $G$ by $v(G)$. A $j$-path in $G$ is a path of length $j$ in $G$. A $j$-cycle in $G$ is a cycle of length $j$ in $G$, and it is called an even cycle if $j$ is even. The {\em girth} of a graph $G$ is the length of the shortest cycles in $G$. For $x\in{V(G)}$, let $\Gamma^k_G(x)=\Gamma^k(x)$ denote the set of vertices of $G$ having distance exactly $k$ from the vertex $x$. 

In the following, the big-$O$ notations $f(n)=O(g(n))$ are understood as $f(n)=O(g(n))$ as $n\to\infty$, where $n$ denotes the order of a graph. The same applies to $\Theta$.

It is easy to see that a graph having large girth cannot have too many edges. The famous Erd\H{o}s girth conjecture asserts the existence of graphs of any given girth with a size of maximum possible order.

\begin{conj}[Erd\H{o}s girth conjecture]
For any positive integer $m$, there exist a constant $c>0$ depending only on $m$ and a family of graphs $\{G_n\}$ such that $v(G_n)=n$, $e(G_n)\geq{c}n^{1+1/m}$ and $\girth(G_n)>2m$.
\end{conj}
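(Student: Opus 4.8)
This is the celebrated Erd\H{o}s girth conjecture, which remains open in full generality; I would not expect to settle it outright, but I can describe the route that proves it in the cases currently known and isolate precisely where the difficulty lies. The governing heuristic is the Moore bound: after passing to a subgraph of minimum degree $d$ at least half the average degree, a girth exceeding $2m$ forces the radius-$m$ ball around any vertex to be a tree, so $v(G)\geq 1+d\sum_{i=0}^{m-1}(d-1)^i=\Omega(d^m)$. Hence the average degree is $O(n^{1/m})$ and $e(G)=O(n^{1+1/m})$, which is exactly why $n^{1+1/m}$ is the ``maximum possible order'' referenced in the statement. The task is therefore to construct, for each $m$, an (essentially) $d$-regular graph with $d=\Theta(n^{1/m})$ and girth $>2m$ that meets this bound.

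My plan is to build such graphs algebraically as incidence (Levi) graphs of finite generalized polygons. Recall that a thick generalized $r$-gon of order $q$ yields a bipartite, $(q+1)$-regular graph $G$ whose two sides each have $\Theta(q^{r-1})$ vertices and whose girth is exactly $2r$. Setting $r=m+1$ gives girth $2m+2>2m$, order $n=\Theta(q^{m})$, degree $q+1=\Theta(n^{1/m})$, and hence $e(G)=\Theta(n^{1+1/m})$, as required. First I would realize these polygons over finite fields $\mathbb{F}_q$: the generalized triangles are the projective planes $\mathrm{PG}(2,q)$ (giving $m=2$, girth $6$), the generalized quadrangles arise from symplectic or orthogonal forms (giving $m=3$, girth $8$), and the generalized hexagons come from the split Cayley hexagon attached to the group $G_2(q)$ (giving $m=5$, girth $12$). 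In each case the girth computation reduces to checking that the underlying geometry contains no short closed configurations, and letting $q$ range over prime powers produces the required family $\{G_n\}$ with a constant $c$ depending only on $m$; the case $m=1$ is handled trivially by $K_n$.

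The main obstacle --- and the reason the conjecture is still open --- is that this algebraic approach is fundamentally limited. By the Feit--Higman theorem, finite thick generalized $r$-gons exist only for $r\in\{2,3,4,6,8\}$, so the incidence-graph construction delivers the optimal bound only for $m\in\{1,2,3,5\}$, and even $m=7$ (via octagons) is delicate because generalized octagons cannot be balanced, so the two sides of the incidence graph have incomparable sizes and the degree no longer matches $n^{1/m}$ cleanly. For every other value of $m$ no construction attaining the Moore order is known; the best general results, such as the graphs of Lazebnik--Ustimenko--Woldar, guarantee girth $>2m$ but only with $e(G)=\Omega(n^{1+c/m})$ for a constant $c<1$, falling short of the conjectured exponent. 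Closing this gap for even a single new value of $m$ would be a major breakthrough, so a genuine proof plan cannot rely on reassembling the known constructions --- it would require an entirely new source of dense high-girth graphs, which is exactly what is missing.
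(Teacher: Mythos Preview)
Your assessment is correct: the paper does not prove this statement at all --- it is explicitly labeled a \emph{Conjecture} and the surrounding text only remarks that it is known for $m=1,2,3,5$ (citing Reiman, Brown, Benson, and Wenger) and quotes the Bondy--Simonovits upper bound. Your exposition of the Moore bound, the generalized-polygon constructions, and the Feit--Higman obstruction is accurate and goes well beyond what the paper itself says, so there is nothing to compare against; the paper simply assumes the conjecture as motivation for studying cycle counts in graphs that would witness it.
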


Indeed, such size is maximum by the result of Bondy and Simonovits \cite{BONSIM}, in which an explicit constant is given. They showed that a graph $G_n$ of order $n$ with $\girth(G_n)>2m$ has a size less than $100mn^{1+1/m}$.

This conjecture has been proved true for $m=1,2,3,5$. See \cite{REI}, \cite{BRO}, \cite{BEN} and \cite{WEN}. For a general $m$, Sudakov and Verstra\"ete \cite{SUDVER} showed that if such graphs exist, then they contain at least one cycle of any even length between and including $2m+2$ and $Cn$, for some constant $C>0$.

For $\ell>m$, by counting the number of $(2\ell-2m)$-paths, one can show that the number of $2\ell$-cycles in such graphs has an order not greater than $O(n^{2\ell/m})$, provided that $\deg_{G_n}(x)=\Theta(n^{1/m})$ for any vertex $x$ in $G_n$. We will see in Section \ref{reduction} that in the asymptotic case, one can assume this without loss of generality. This suggests the definition of {\em almost regularity} given in Section \ref{reduction}.

In this article, we give a lower bound on the number of $2\ell$-cycles when $\ell=O(1)$, and conclude that the number of $2\ell$-cycles is $\Theta(n^{2\ell/m})$. The precise statement is the following.

\begin{thm}
\label{ourthm}
For any real number $c>0$ and integers $M,m$ with $M>m\geq2$, there exist a constant $\alpha>0$ and an integer $N$ such that if $\{G_n\}$ is a family of graphs satisfying $v(G_n)=n$, $e(G_n)\geq{c}n^{1+1/m}$ and $\girth(G_n)>2m$, then for $n\geq{N}$ and $m+1\leq\ell\leq{M}$, the number of $2\ell$-cycles in $G_n$ is at least $\alpha{n^{2\ell/m}}$.
\end{thm}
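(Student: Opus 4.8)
The plan is to reduce to the almost-regular case and then count $2\ell$-cycles as antipodal pairs of internally disjoint $\ell$-paths, using convexity to force many such pairs. First I would invoke the reduction of Section~\ref{reduction} to assume $\deg_{G_n}(x)=\Theta(n^{1/m})$ for every vertex $x$, and write $d$ for the common order of magnitude $n^{1/m}$ of the degrees. The key local quantity is $B_\ell(x)$, the number of $\ell$-paths (sequences of distinct vertices) starting at $x$. Growing a path greedily, at each of the $\ell$ steps the current endpoint has at least $d-\ell$ unused neighbours, so $B_\ell(x)\geq(d-\ell)^\ell=\Omega(n^{\ell/m})$; note this uses only the uniform lower bound on degrees, not the girth.

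Next I would split $B_\ell(x)=\sum_y B_\ell(x,y)$, where $B_\ell(x,y)$ counts $\ell$-paths from $x$ to $y$ and the sum ranges over the set $Y_\ell(x)$ of endpoints. Since every $y\in Y_\ell(x)$ lies within distance $\ell$ of $x$ we have the crude bound $|Y_\ell(x)|\leq n$, and because $\ell>m$ this is already smaller than $n^{\ell/m}$. Cauchy--Schwarz then gives $\sum_y B_\ell(x,y)^2\geq B_\ell(x)^2/|Y_\ell(x)|=\Omega(n^{2\ell/m-1})$, so the number of ordered pairs of \emph{distinct} co-terminal $\ell$-paths from $x$, namely $\sum_y\big(B_\ell(x,y)^2-B_\ell(x,y)\big)$, is $\Omega(n^{2\ell/m-1})$ (the diagonal term $B_\ell(x)=O(n^{\ell/m})$ is absorbed precisely because $\ell>m$). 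Summing over the $n$ choices of $x$ produces $\Omega(n^{2\ell/m})$ ordered triples $(x,P,Q)$ with $P\neq Q$ two $\ell$-paths from $x$ to a common endpoint $y$.

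Each such triple in which $P$ and $Q$ are internally disjoint yields a genuine $2\ell$-cycle $P\cup Q$, and conversely a fixed $2\ell$-cycle arises from exactly $4\ell$ triples (choose one of its $2\ell$ vertices as $x$, let $y$ be the antipodal vertex, and order the two arcs). Hence it remains to discard the \textbf{degenerate} triples, those in which $P$ and $Q$ share an internal vertex, and controlling these is the step I expect to be the main obstacle. For a degenerate pair the union $U=P\cup Q$ is connected with vertex count $h:=v(U)\leq 2\ell-1$ (the two paths now share at least three vertices) and first Betti number $\beta=e(U)-h+1\geq 1$ (distinct $x$--$y$ paths create a cycle). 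In an almost-regular graph of girth $>2m$, a breadth-first construction shows the number of copies of a fixed such $U$ has order $n^{1-\beta+e(U)/m}$: a factor $n$ for the root, $d=n^{1/m}$ for each of the $h-1$ spanning-tree edges, and $d/n$ for each of the $\beta$ closing edges. Substituting $e(U)=h+\beta-1\leq 2\ell+\beta-2$, the exponent is at most $1-\beta+(2\ell+\beta-2)/m$, which is strictly less than $2\ell/m$ exactly when $m-2<\beta(m-1)$, and this holds for every $\beta\geq 1$ and $m\geq 2$. As there are only $O(1)$ isomorphism types of such $U$, each realised by $O(1)$ pairs $(P,Q)$, the degenerate triples number $o(n^{2\ell/m})$. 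Subtracting them from the $\Omega(n^{2\ell/m})$ triples above leaves $\Omega(n^{2\ell/m})$ internally disjoint triples, and dividing by $4\ell\leq 4M$ gives the claimed bound with $\alpha$ depending only on $c$, $m$ and $M$.
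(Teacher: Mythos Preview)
Your overall strategy---Cauchy--Schwarz on $\ell$-path counts, then subtract degenerate pairs---is sound and genuinely different from the paper's. The paper never looks at pairs of $\ell$-paths; instead it fixes a vertex $x$, works inside the bipartite ``annulus'' $G_x$ induced on $\Gamma^m(x)\cup\Gamma^{m+1}(x)$, finds many odd-length paths there (Lemma~\ref{kpath}), and glues each to the two unique $m$-paths back to $x$ (Lemma~\ref{disjointpath}) to manufacture a $(2m+k+1)$-cycle. This handles $2\ell\leq 4m$ directly (Proposition~\ref{smallcycles}); longer cycles are then obtained by iterating the same construction inside $H_x$. Your argument is more uniform in $\ell$ and avoids the iteration, at the price of having to control all degenerate unions $U=P\cup Q$ at once.

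That control is where your write-up has a real gap. The assertion that a fixed connected $U$ with Betti number $\beta$ has $O(n^{1-\beta+e(U)/m})$ copies is \emph{true} in a graph of girth $>2m$ and maximum degree $O(n^{1/m})$, but the justification ``$d/n$ for each of the $\beta$ closing edges'' is random-graph heuristics, not a proof: in a deterministic graph a prescribed pair of embedded vertices is either adjacent or not, and nothing you have said bounds how often it is. The correct argument uses the girth, not any pseudo-randomness. One clean way: take an ear decomposition of $U$ (a cycle $C_1$ followed by ears $E_2,\ldots,E_\beta$ and pendant tree edges). Embed $C_1$ by choosing a $(|C_1|-m)$-subpath---at most $n\cdot d^{|C_1|-m}$ ways---since the closing $m$-path is unique by girth. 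For each ear $E_i$ with endpoints already placed, if $|E_i|\geq m$ choose its first $|E_i|-m$ edges and note the remaining $m$-path to the other endpoint is again unique; if $|E_i|<m$ the whole ear is determined (at most one short path between two fixed vertices). Pendant edges cost $d$ each. Summing, the exponent of $d$ is at most $e(U)-m\beta$, which gives exactly your claimed bound, and then your inequality $m-2<\beta(m-1)$ finishes the job. You should replace the ``$d/n$'' sentence with this argument (and check the ear bookkeeping when fundamental cycles overlap); as written, the key estimate is asserted rather than proved.
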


We proceed as follows. In Section 2, we show that by adjusting the threshold $N$ in our theorem, we can further assume that the graphs have some nice properties, namely bipartite and almost regular. In Section 3, we count the number of short even cycles in $G_n$ up to length $4m$. Finally, in Section 4, we entend the argument to longer cycles, completing the proof of the main theorem.

\section{Reduction to a simpler case}
\label{reduction}
In this section, we show that it suffices to consider only bipartite graphs which are almost regular, defined as follows.

\begin{defn}
Suppose $\{G_n\}$ is a family of graphs with $v(G_n)=n$ and $\girth(G_n)>2m$, we say that $\{G_n\}$ is {\em almost regular} if there exist $c_1,c_2>0$ such that
$c_1n^{1/m}\leq\deg(x)\leq{c_2}n^{1/m}$ for any vertex $x\in{V(G_n)}$.
\end{defn}

It is a well-known fact that any graph has a bipartite subgraph with at least half of its edges. It remains to construct subgraphs whose maximum and minimum degree is of order $n^{1/m}$. To achieve this, we repeatedly apply a theorem of Bondy and Simonovits \cite{BONSIM}, which states that if an $n$-vertex graph $G_n$ has girth larger than $2m$, then $G_n$ has less than $100mn^{1+1/m}$ edges.

First we delete vertices of small degree.

\begin{lem}
\label{degreelowerbound}
For any real number $c>0$ and integer $m\geq2$, there exists a constant $\beta>0$ such that any bipartite graph $G$ of order $n$, size at least $cn^{1+1/m}$ and girth larger than $2m$ has a subgraph $H$ having at least $\beta{n}$ vertices, at least $\frac{9c}{10}n^{1+1/m}$ edges and minimum degree at least $\frac{c}{10}n^{1/m}$.
\end{lem}
\begin{proof}
Let $G=H_0$. For $i\geq1$, inductively define $H_i$ to be the subgraph of $H_{i-1}$ induced by all vertices having degree at least $\frac{c}{10}n^{1/m}$ in $H_{i-1}$. Then for all $i$ we have $e(H_i)\geq{e(H_{i+1})}$ and
\begin{equation*}\begin{split}
e(H_i)&\geq{e(H_0)}-(v(H_0)-v(H_i))\dfrac{c}{10}n^{1/m}\\
&\geq{cn^{1+1/m}}-n\dfrac{c}{10}n^{1/m}\\
&=\dfrac{9c}{10}n^{1+1/m},
\end{split}\end{equation*}
and so there exists some $j\geq0$ such that $H_i=H_j$ for all $i\geq{j}$.

Set $H=H_j$, which has girth larger than $2m$ and size at least $\frac{9c}{10}n^{1+1/m}$. Then,
\begin{equation*}
\dfrac{9c}{10}n^{1+1/m}<100m\cdot{v(H)}^{1+1/m},
\end{equation*}
or $v(H)\geq\beta{n}$, where
\begin{equation*}
\beta=\left(\dfrac{9c}{1000m}\right)^{m/(m+1)}.
\end{equation*}
\end{proof}

\begin{lem}
\label{degreeupperbound}
For every real number $c>0$ and integer $m\geq2$, there exists a constant $\gamma>0$ such that any bipartite graph $G$ of order $n$, size at least $cn^{1+1/m}$ and girth larger than $2m$ has a subgraph $H$ having at least $\frac{n}2$ vertices, at least $\frac{c}4n^{1+1/m}$ edges and maximum degree at most $\gamma{n}^{1/m}$.
\end{lem}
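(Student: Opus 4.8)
The plan is to keep all $n$ vertices and delete only \emph{edges}, so that the requirement of at least $\frac n2$ vertices becomes automatic; it then suffices to produce a spanning subgraph $H$ with $\Delta(H)\le\gamma n^{1/m}$ and $e(H)\ge\frac c4 n^{1+1/m}$. Writing $D=\gamma n^{1/m}$, I would first record the elementary fact that $G$ has such a subgraph with $e(H)\ge e(G)-\Phi$, where $\Phi=\sum_{v}\max(0,\deg(v)-D)$ is the total \emph{excess degree}: while some vertex $v$ has $\deg(v)>D$, delete an edge at $v$; the potential $\Phi$ drops by at least $1$ at each step, so at most $\Phi$ edges are removed. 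Since $e(G)\ge cn^{1+1/m}$, the lemma follows once I show that for a suitable constant $\gamma=\gamma(c,m)$ one has $\Phi\le\frac34 e(G)$, for then $e(H)\ge\frac14 e(G)\ge\frac c4 n^{1+1/m}$.

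So everything reduces to bounding the excess degree, and here I would exploit the girth. Because $\girth(G)>2m$, the ball of radius $m$ around every vertex is a tree, which gives the two standard consequences I would use: the cherry bound $\sum_v\binom{\deg(v)}2\le\binom n2$ (any two vertices have at most one common neighbour), and more generally $\sum_v|\Gamma^k(v)|\le n(n-1)$ for each $k\le m$ (any two vertices are joined by at most one path of length $\le m$). Let $S=\{v:\deg(v)>D\}$ and $T=V\setminus S$. Bondy--Simonovits applied to $G$ gives $e(G)<100m\,n^{1+1/m}$, whence $|S|\le 2e(G)/D<\frac{200m}{\gamma}n$, and applying it again to the small graph $G[S]$ gives $e(G[S])<100m|S|^{1+1/m}=O_m(\gamma^{-1-1/m})\,n^{1+1/m}$, a vanishing fraction of $e(G)$ as $\gamma\to\infty$. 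Since $\Phi\le\sum_{v\in S}\deg(v)=2e(G[S])+e(S,T)$, it remains to bound the number of edges $e(S,T)$ running between high- and low-degree vertices.

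This last bound is the crux, and the difficulty is genuinely $m$-dependent. For $m=2$ the cherry bound alone suffices: the $S$--$T$ graph is $C_4$-free with $|S|\le\frac{200m}\gamma n$, so $e(S,T)=O(\sqrt{|S|}\,n)=O(\gamma^{-1/2}n^{3/2})=O(\gamma^{-1/2}n^{1+1/m})$, again vanishing for large $\gamma$. For $m\ge3$, however, the same ($C_4$-free) estimate only yields $e(S,T)=O(n^{2-1/m}/\gamma)$, and since $2-1/m>1+1/m$ this is far larger than $e(G)$; the cherry bound is simply too weak. The point I would have to make work is that the \emph{full} girth forces the low-degree endpoints to be spread out: a neighbourhood-growth (Moore-type) argument should show that if many edges emanated from the vertices of $S$ (each of degree $>\gamma n^{1/m}$) into $T$, then the distance-$m$ neighbourhoods would be so large as to violate $\sum_v|\Gamma^m(v)|\le n(n-1)$, forcing $e(S,T)=O_m(n^{1+1/m}/\gamma)$.

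The main obstacle, which I expect to absorb most of the work, is that this Moore-type lower bound on $|\Gamma^m(v)|$ needs growth at \emph{every} level of the tree, i.e.\ a lower bound on the degrees of the vertices \emph{encountered}, not just on $\deg(v)$ itself. A high-degree vertex all of whose neighbours are pendant-like contributes nothing to $\Gamma^2$, so the clean biregular computation (in which $\deg_S\cdot\deg_T\lesssim n^{2/m}$, giving $\deg_T\lesssim n^{1/m}/\gamma$ and hence $e(S,T)\le|T|\deg_T\lesssim n^{1+1/m}/\gamma$) does not transfer verbatim. To close this gap I would use the hypothesis $e(G)\ge cn^{1+1/m}$ to separate the ``bulk'', where most edges live and degrees are comparable to the average $\Theta(n^{1/m})$, from a negligible-edge remainder of vertices attached to low-degree ones; equivalently, I would pass to the auxiliary graph on $S$ in which two vertices are adjacent when they share a $T$-neighbour (whose girth is $>m$, halving the problem) and iterate. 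Making the constants depend only on $c$ and $m$, uniformly in $n$, is the delicate part.
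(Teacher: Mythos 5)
Your reduction (keep all $n$ vertices and trim edges at high-degree vertices via the excess potential $\Phi$), your Bondy--Simonovits bound on $e(G[S])$, and your $m=2$ case are all correct, and your decomposition $\Phi\le 2e(G[S])+e(S,T)$ matches the skeleton of the paper's proof (the paper takes the induced subgraph $H=G[T_\gamma]$ instead of a spanning subgraph, an immaterial difference). But the statement you must prove for $m\ge3$, namely $e(S,T)=O_m(n^{1+1/m}/\gamma)$, is exactly what you leave open: you say a Moore-type argument ``should show'' it, you correctly identify why the naive version fails (no degree lower bound on the $T$-side vertices encountered when growing neighbourhoods), and neither of your proposed repairs is an argument. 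The first is circular: the hypothesis $e(G)\ge cn^{1+1/m}$ gives no control whatsoever on the degree distribution, and the claim that ``most edges live where degrees are comparable to the average $\Theta(n^{1/m})$'' is precisely the conclusion that Lemma \ref{degreelowerbound} and Lemma \ref{degreeupperbound} are jointly being used to establish, so you cannot invoke it here. The second (an auxiliary graph on $S$ whose edges are shared $T$-neighbours, plus iteration) is too vague to evaluate. So the proposal is genuinely incomplete at its central step.

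The paper closes exactly this gap by proving the bound on $e(S_\gamma,T_\gamma)$ in contrapositive form, harvesting the missing degree control from the very assumption being refuted. Suppose $e(S_\gamma,T_\gamma)\ge\frac{c}{2}n^{1+1/m}$. Then the bipartite graph $G_\gamma$ consisting of just these $S$--$T$ edges satisfies the hypotheses of Lemma \ref{degreelowerbound} (girth $>2m$, many edges), which yields a subgraph $H_\gamma$ with at least $\left(\frac{9c}{2000m}\right)^{m/(m+1)}n$ vertices and minimum degree at least $\frac{c}{20}n^{1/m}$: this is the level-by-level growth you were missing, now available on the $T$-side as well. Next, delete all edges inside $T_\gamma$; in the resulting graph $T_\gamma$ is an independent set, so every $m$-path has at most $\lfloor m/2\rfloor$ internal vertices in $T_\gamma$, and one can grow $m$-paths greedily, gaining a factor of at least $\gamma n^{1/m}$ at each $S_\gamma$-step (the deletion does not touch $S$-degrees) and at least $\frac{c}{20}n^{1/m}$ at each step from $T_\gamma\cap V(H_\gamma)$. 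This produces at least $\frac12\left(\frac{9c}{2000m}\right)^{m/(m+1)}\left(\frac{c\gamma}{20}\right)^{\lfloor m/2\rfloor}n^2$ paths of length $m$. On the other hand, girth $>2m$ forces any two vertices to be joined by at most one $m$-path, so $G$ has at most $n^2$ of them. Comparing the two counts bounds $\gamma$ above by an explicit constant depending only on $c$ and $m$; choosing $\gamma$ larger than that constant therefore forces $e(S_\gamma,T_\gamma)<\frac{c}{2}n^{1+1/m}$, and the lemma follows. This contrapositive application of Lemma \ref{degreelowerbound} to the $S$--$T$ edge set (rather than to $G$ itself), combined with the girth ceiling on the number of $m$-paths, is the one idea your proposal lacks.
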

\begin{proof}
For any $\gamma>0$, let $S_{\gamma}$ be the set of vertices of $G$ having degree at least $\gamma{n}^{1/m}$, and let $T_{\gamma}$ be the remaining vertices of $G$. We want to find a $\gamma$ so large that $H$ can be chosen as the subgraph $G[T_\gamma]$ of $G$ induced by $T_\gamma$. It suffices to find $\gamma$ large enough so that $e(G[S_\gamma])<e(G)/4$, $e(T_\gamma,S_\gamma)<e(G)/2$ and $v(G[T_\gamma])\geq{n}/2$. 

Since both $G$ and $G[S_{\gamma}]$ have girth larger than $2m$, we can apply the result of \cite{BONSIM} twice to obtain
\begin{equation*}\begin{split}
e(G[S_{\gamma}])&<100m|S_{\gamma}|^{1+1/m}\\
&\leq100m\left(\dfrac{2e(G)}{\gamma{n}^{1/m}}\right)^{1+1/m}\\
&\leq100m\left(\dfrac{2\cdot100mn^{1+1/m}}{\gamma{n}^{1/m}}\right)^{1+1/m}\\
&=\left(\dfrac{2}{\gamma}\right)^{1+1/m}(100m)^{2+1/m}n^{1+1/m}.
\end{split}\end{equation*}
To satisfy the first condition, we choose $\gamma$ large enough so that $e(G[S_\gamma])<\frac{e(G)}{4}<\frac{100m}{4}n^{1+1/m}$, or
\begin{equation}
\label{e(S)}
\gamma>2\cdot100m\cdot4^{m/(m+1)}>400m.
\end{equation}

The second condition can be obtained via its contrapositive. Suppose $e(S_\gamma,T_\gamma)\geq\frac{c}{2}n^{1+1/m}$. Let $G_\gamma$ be the subgraph of $G$ induced by $E(S_\gamma,T_\gamma)$. Then apply Lemma \ref{degreelowerbound} to $G_\gamma$, we get a subgraph $H_\gamma$ of $G_\gamma$ having $\nu\geq\left(\frac{9c}{2000m}\right)^{m/(m+1)}n$ vertices and minimum degree at least $\frac{c}{20}n^{1/m}$. Now, we consider the subgraph $G'_\gamma$ of $G$ deleting the edges in $E(T_\gamma)$. Then, in $G'_\gamma$, every vertex in $S_\gamma\cap{V(H_\gamma)}$ still has degree at least $\gamma{n}^{1/m}$ and every vertex in $T_\gamma\cap{V(H_\gamma)}$ has degree at least $\frac{c}{20}n^{1/m}$. Note that any $m$-path in $G'_\gamma$ has at most $\lfloor{m/2}\rfloor$ internal vertices in $T_\gamma$, therefore the number of $m$-paths in $G'_\gamma$ is at least
\begin{equation*}
\dfrac12\nu\left(\dfrac{c}{20}n^{1/m}\right)^{\lfloor{m/2}\rfloor}\left(\gamma{n}^{1/m}\right)^{\lceil{m/2}\rceil}\geq\dfrac12\left(\dfrac{9c}{2000m}\right)^{m/(m+1)}\left(\dfrac{c\gamma}{20}\right)^{\lfloor{m/2}\rfloor}n^2.
\end{equation*}
But the number of $m$-paths in $G$ cannot be larger than $n^2$, since otherwise there is a pair of vertices being the endpoints of two $m$-paths, contradicting the girth of $G$ is larger than $2m$. Hence,
\begin{equation*}
\dfrac12\left(\dfrac{9c}{2000}\right)^{m/(m+1)}\left(\dfrac{c\gamma}{20}\right)^{\lfloor{m/2}\rfloor}<1,
\end{equation*}
or
\begin{equation*}
\gamma<\dfrac{20}{c}\left(2\left(\dfrac{2000}{9c}\right)^{m/(m+1)}\right)^{1/\lfloor{m/2}\rfloor}<\dfrac{40}{c}\left(\dfrac{2000}{9c}\right)^{3/(m+1)}.
\end{equation*}
Therefore, if
\begin{equation*}
\label{e(S,T)}
\gamma>\dfrac{40}{c}\left(\dfrac{2000}{9c}\right)^{3/(m+1)},
\end{equation*}
then $e(S_\gamma,T_\gamma)<\frac{c}{2}n^{1+1/m}\leq\frac{e(G)}{2}$.

Finally, since $|S_\gamma|\leq\frac{200m}{\gamma}n$, the third condition is fulfilled by \eqref{e(S)}. This finishes the proof.
\end{proof}

\section{Counting short cycles}
\label{countingshortcycles}
From now on, we suppose that $G$ is a bipartite $n$-vertex graph having at least $cn^{1+1/m}$ edges with girth larger than $2m$, such that for some constants $c_1,c_2>0$, there holds $c_1n^{1/m}\leq\deg_G(x)\leq{c_2}n^{1/m}$ for any vertex $x\in{V(G)}$.

In this section, we give a lower bound on the number of the $2\ell$-cycles in $G$, for each $m+1\leq\ell\leq2m$. 

We first sketch the idea. Let $x$ be a vertex in $G$. Suppose we have a path of odd length $k$ in $\Gamma_G^m(x)\cup\Gamma_G^{m+1}(x)$ with endpoints $w_0\in\Gamma_G^m(x)$ and $w_k\in\Gamma_G^{m+1}(x)$. For each neighbor $y$ in $\Gamma_G^m(x)$ of $w_k$, the four paths joining $x$ to $w_0$, $w_0$ to $w_k$, $w_k$ to $y$, and $y$ to $x$ form a closed walk, which contains a cycle, as shown in Figure \ref{idea}. We show in Section \ref{generic case} that generically these paths are internally disjoint, i.e. the length of the cycle is $2m+k+1$. Then we count in Section \ref{subgraph} the number of such paths and the number of neighbours of $w_k$. Finally, we obtain the desired lower bound in Section \ref{shortcycleslowerbound}.

\begin{figure}[ht!]
\centering
\includegraphics[width=80mm]{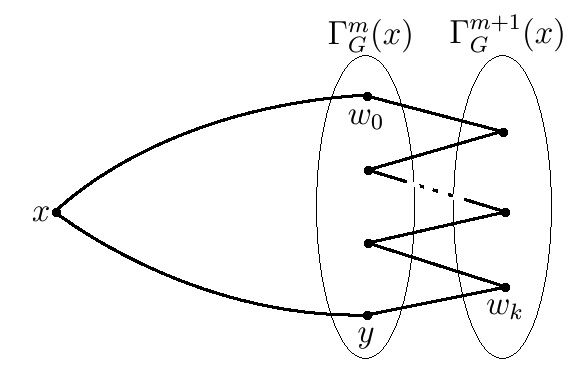}
\caption{The four paths form a closed walk, which contains a cycle}
\label{idea}
\end{figure}

\subsection{Internally disjoint closed walk}
\label{generic case}
Note that for distinct $i,j\leq{m+1}$, we know that $\Gamma^i_G(x)\cap\Gamma^j_G(x)$ is empty because $G$ is bipartite and has girth larger than $2m$. In particular, the subgraph $G_x$ of $G$ induced by the vertices $\Gamma^m_G(x)\cup\Gamma^{m+1}_G(x)$ is bipartite with bipartition $\{\Gamma^m_G(x),\Gamma^{m+1}_G(x)\}$. Hence, any path of odd length in $G_x$ has one endpoint in $\Gamma^m_G(x)$ and the other endpoint in $\Gamma^{m+1}_G(x)$.

For $1\leq{i}\leq{m}$ and any vertex $w\in\Gamma_G^i(x)$, there is a unique $(x,w)$-path $P_w^x$ of length $i$ in $G$. Note that for any two vertices $y_1,y_2\in\Gamma_G^m(x)$, the intersection of the paths $P_{y_1}^x$ and $P_{y_2}^x$ must be a path, of which $x$ is an endpoint. The following lemma guarantees that if $w_0\in\Gamma_G^m(x)$ and $w_k\in\Gamma_G^{m+1}(x)$, there is at most one neighbour $u\in\Gamma_{G_x}^1(w_k)$ so that $P_{w_0}^x$ and $P_u^x$ intersect internally, see Figure \ref{intersectinternally}.
\begin{figure}[ht!]
\centering
\includegraphics[width=80mm]{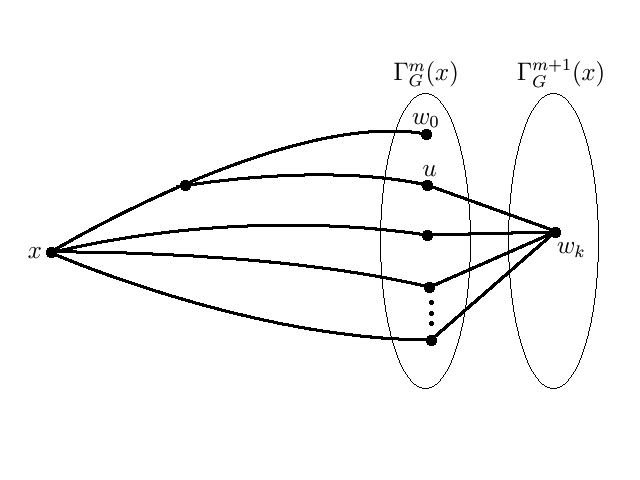}
\caption{Other neighbours of $w_k$ give internally disjoint paths}
\label{intersectinternally}
\end{figure}

\begin{lem}
\label{disjointpath}
Suppose two vertices $y_1,y_2\in\Gamma_G^m(x)$ share a common neighbour $w\in\Gamma_G^{m+1}(x)$, then the paths $P_{y_1}$ and $P_{y_2}$ are internally disjoint.
\end{lem}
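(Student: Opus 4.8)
The plan is to argue by contradiction: if the two geodesics $P_{y_1}$ and $P_{y_2}$ shared an internal vertex, I would extract from that overlap a cycle of length at most $2m$, contradicting the hypothesis $\girth(G)>2m$. The whole argument rests on the two facts already recorded above, namely that each $P_{y_i}$ is the unique $(x,y_i)$-path of length $m$, and that the intersection $P_{y_1}\cap P_{y_2}$ is a path having $x$ as an endpoint.

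First I would set up the overlap precisely. Assuming $P_{y_1}$ and $P_{y_2}$ are not internally disjoint, they share a vertex other than $x$, so I would let $z$ be the common vertex farthest from $x$, say $z\in\Gamma_G^d(x)$. Since the shared vertex is not $x$ we have $d\geq 1$, and since $y_1\neq y_2$ both lie in $\Gamma_G^m(x)$, the point $z$ cannot equal either $y_i$ (otherwise one geodesic would pass through the other's endpoint at distance $m$, forcing $y_1=y_2$); hence $d\leq m-1$. By the choice of $z$, the tail of $P_{y_1}$ from $z$ to $y_1$ and the tail of $P_{y_2}$ from $z$ to $y_2$ meet only at $z$, and each has length $m-d$.

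Next I would assemble the cycle. Traversing the tail of $P_{y_1}$ from $z$ to $y_1$, then the edge $y_1w$, then the edge $wy_2$, then the reversed tail of $P_{y_2}$ from $y_2$ back to $z$, produces a closed walk through $z,y_1,w,y_2$. I would verify this is a genuine cycle: the two tails meet only at $z$, the endpoints $y_1,y_2$ are distinct, and $w\in\Gamma_G^{m+1}(x)$ lies on neither geodesic since those contain only vertices at distance at most $m$ from $x$. Its length is
\begin{equation*}
(m-d)+1+1+(m-d)=2m-2d+2,
\end{equation*}
which is at least $4$ because $d\leq m-1$ and at most $2m$ because $d\geq 1$. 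This contradicts $\girth(G)>2m$, so $P_{y_1}$ and $P_{y_2}$ must share only $x$, i.e.\ they are internally disjoint.

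The only delicate point, and the step I would be most careful about, is the bookkeeping that guarantees the closed walk is an honest cycle rather than a walk that repeats a vertex: one must confirm simultaneously that $d\geq 1$ (so the loop closes up short enough), that $d\leq m-1$ (so each tail has positive length and the cycle is nondegenerate), and that $w$ avoids both geodesics. Everything else follows immediately from the uniqueness of short paths forced by the girth.
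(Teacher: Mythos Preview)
Your argument is correct and is essentially identical to the paper's own proof: both take the farthest shared vertex $z\in\Gamma_G^d(x)$ (the paper calls it $v\in\Gamma_G^L(x)$) and observe that the two tails together with the edges $y_1w$ and $wy_2$ form a cycle of length $2(m-d)+2\le 2m$. You are in fact more careful than the paper in verifying that the closed walk is an honest cycle (checking $d\le m-1$ and that $w$ lies off both geodesics), but the route is the same.
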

\begin{proof}
\begin{figure}[ht!]
\centering
\includegraphics[width=80mm]{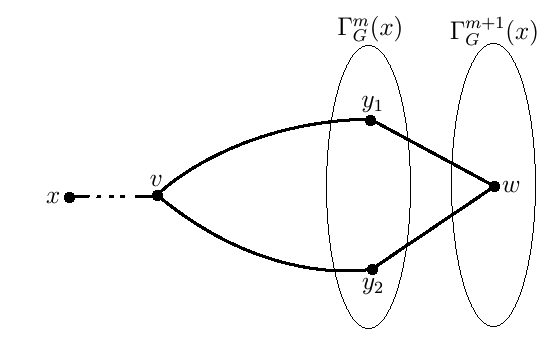}
\caption{A cycle of length at most $2m$ formed}
\label{v}
\end{figure}
Suppose the paths $P_{y_1}^x$ and $P_{y_2}^x$ intersects internally, then their intersection must be a path of length $L\geq1$, with endpoints $x$ and $v$, for some $v\in\Gamma_G^L(x)$. Thus, the union of the paths $P_{y_1}\backslash{P_v}$, $P_{y_2}\backslash{P_v}$ and the edges $(y_2,w)$, $(w,y_1)$ is a cycle of length $2(m-L)+2\leq2m$ in $G$, as in Figure \ref{v}, contradiction.
\end{proof}

Now, given a path $P=(w_0,w_1,\ldots,w_k)$ of odd length $k\leq2m-1$ in $G_x$ with $w_0\in\Gamma_G^m(x)$ and $w_k\in\Gamma_G^{m+1}(x)$. Note that $V(P)\cap\Gamma_{G_x}^1(w_k)=\{w_{k-1}\}$ as $\girth(G_n)>2m$. Let $y\in\Gamma_{G_x}^1(w_k)$. As shown in Figure \ref{cycle2m+k+1}, the four paths $P_{w_0}^x$, $P$, $(w_k,y)$ and $P_y^x$ contain a cycle of length $2m+k+1$, with at most two exceptions, namely $y=w_{k-1}$ and $y=u$.

\begin{figure}[ht!]
\centering
\includegraphics[width=80mm]{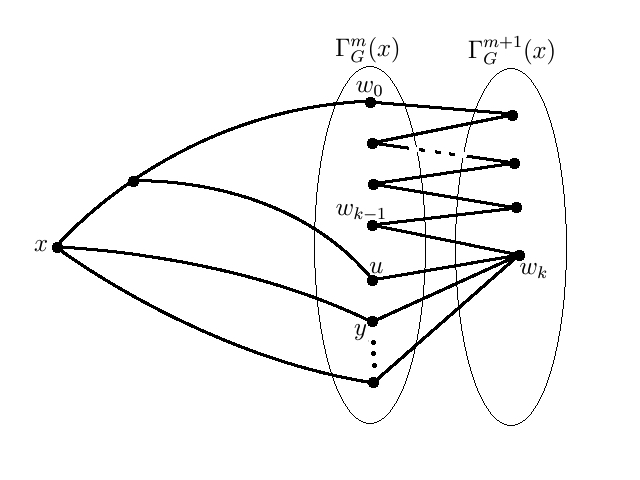}
\caption{Most neighbours of $w_k$ give cycles of length $2m+k+1$}
\label{cycle2m+k+1}
\end{figure}

\subsection{Number of paths in $G_x$}
\label{subgraph}

Note that in $G_x$, the minimum degree can be as small as $1$. Instead of counting the number of paths of a given length in $G_x$, we work with a subgraph of $G_x$ having large minimum degree. We adopt the result from Section \ref{reduction}.

It is easy to see that $v(G_x)\leq{n}$ and
\begin{equation*}\begin{split}
e(G_x)\geq\big|\Gamma_G^m(x)\big|c_1n^{1/m}\geq\big(c_1n^{1/m}\big)^{m+1}=c_1^{m+1}n^{1+1/m}.
\end{split}\end{equation*}
Using Lemma \ref{degreelowerbound}, we obtain a bipartite subgraph $H_x$ of $G_x$ having order at least $
\left(\frac{9c_1^{m+1}}{1000m}\right)^{1/(1+m)}n$, size at least $\frac{9c_1^{m+1}}{10}n^{1+1/m}$, and $\frac{c_1^{m+1}}{10}n^{1/m}\leq\deg_{H_x}(u)\leq{c_2}n^{1/m}$, for any vertex $u\in{V(H_x)}$, with bipartition $\{A_x,B_x\}$, where $A_x=\Gamma_G^m(x)\cap{H_x}$, and $B_x=\Gamma_G^{m+1}(x)\cap{H_x}$. 

\begin{lem}
\label{kpath}
Let $k$ be an odd number satisfying $1\leq{k}\leq{2m-1}$. The number of $k$-paths in $G_x$ is at least
$$\dfrac{9c_1^{(m+1)(k+1)}}{10^{k+1}c_2}n^{1+k/m}.$$
\end{lem}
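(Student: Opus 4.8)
The plan is to count $k$-paths inside the rich subgraph $H_x$ constructed above rather than in $G_x$ directly; since $H_x\subseteq G_x$, every $k$-path of $H_x$ is a $k$-path of $G_x$, so a lower bound for $H_x$ suffices. The whole point of passing to $H_x$ is that it has large minimum degree, $\delta:=\frac{c_1^{m+1}}{10}n^{1/m}\le\deg_{H_x}(u)$, which lets us build many paths greedily, while as a subgraph of $G$ it still has girth larger than $2m$.

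The key observation, which I would isolate first, is that in a graph of girth larger than $2m$ every non-backtracking walk of length at most $2m-1$ is automatically a path, i.e.\ has pairwise distinct vertices. Indeed, if a non-backtracking walk $v_0,\dots,v_k$ had a repetition, pick $i<j$ with $v_i=v_j$ and $j-i$ minimal; then $v_i,\dots,v_{j-1}$ are distinct, and non-backtracking forces $j-i\ge 3$, so these vertices span a cycle of length $j-i\le k\le 2m-1$, contradicting $\girth(G)>2m$. This reduces the problem of counting $k$-paths to the far easier problem of counting non-backtracking walks of length $k$, which needs only the degree bounds.

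I would then count walks rooted at the side $A_x$. Because $k$ is odd and $H_x$ is bipartite with parts $A_x,B_x$, every $k$-path has exactly one endpoint in $A_x$, so orienting it from that endpoint gives a bijection between $k$-paths and non-backtracking walks $v_0,\dots,v_k$ with $v_0\in A_x$. To build such a walk, choose $v_0\in A_x$ (at least $|A_x|$ ways), then $v_1$ among the $\ge\delta$ neighbours of $v_0$ in $H_x$, and for each $2\le i\le k$ choose $v_i$ among the neighbours of $v_{i-1}$ other than $v_{i-2}$, giving at least $\delta-1$ choices; by the previous paragraph every such walk is a genuine path, so there is no overcounting. This yields at least $|A_x|\,\delta(\delta-1)^{k-1}$ distinct $k$-paths. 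Finally I would bound $|A_x|$ from below: since every vertex of $B_x$ has degree at most $c_2n^{1/m}$, we have $e(H_x)\le |A_x|\,c_2n^{1/m}$, whence $|A_x|\ge e(H_x)/(c_2n^{1/m})\ge\frac{9c_1^{m+1}}{10c_2}n$; substituting this together with $\delta=\frac{c_1^{m+1}}{10}n^{1/m}$ gives precisely the asserted bound $\frac{9c_1^{(m+1)(k+1)}}{10^{k+1}c_2}n^{1+k/m}$.

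The only genuinely delicate point is the girth argument of the second paragraph: it is what guarantees that the greedily built walks are all distinct paths, so that the product of degree-choices is an honest lower bound. Everything else is bookkeeping, the one mild subtlety being that the naive greedy count produces the factor $\delta(\delta-1)^{k-1}$ rather than $\delta^k$; since $n\ge N$ is large and $k\le 2M$ is bounded, the factors $(1-1/\delta)$ are absorbed into the constant (or one simply notes $\delta-1\ge\frac{c_1^{m+1}}{11}n^{1/m}$ for large $n$), which does not affect the stated order $n^{1+k/m}$.
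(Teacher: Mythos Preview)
Your approach is essentially the paper's: pass to $H_x$, bound one bipartition class from below via $e(H_x)/(c_2n^{1/m})$, and grow paths greedily from that side using the minimum degree $\delta=\frac{c_1^{m+1}}{10}n^{1/m}$. The paper starts from $B_x$ rather than $A_x$ and simply writes $|B_x|\,\delta^{k}$ without spelling out the non-backtracking/girth argument you make explicit; your care costs you the factor $(\delta-1)^{k-1}$ in place of $\delta^{k-1}$, which as you note is immaterial for the asymptotic use made of the lemma. One small slip: the inequality $e(H_x)\le |A_x|\,c_2n^{1/m}$ follows from the degree bound on vertices of $A_x$, not of $B_x$ as you wrote.
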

\begin{proof}
The result follows from
$$|B_x|\geq\dfrac{e(H_x)}{c_2n^{1/m}}\geq\dfrac{9c_1^{m+1}}{10c_2}n$$
and that the number of $k$-paths in $G_x$ is at least
$$|B_x|\left(\dfrac{c_1^{m+1}}{10}n^{1/m}\right)^k.$$
\end{proof}

\subsection{Lower bound on the number of short cycles}
\label{shortcycleslowerbound}
The work in the preceding sections allows us to find a lot cycles in $G$. It is clear that a $2\ell$-cycle can be counted by at most $2\ell$ times as each vertex of the cycle can play the role of $x$ once. 

We are ready to give a lower bound on the number of short even cycles, up to length $4m$ in $G$.

\begin{prop}
\label{smallcycles}
Let $m$ be a positive integer. Let $G$ be a bipartite $n$-vertex graph having girth larger than $2m$ and $c_1n^{1/m}\leq\deg_G(v)\leq{c_2}n^{1/m}$ for any vertex $v\in{V(G)}$, for some $c_1,c_2>0$. Then for $m+1\leq{\ell}\leq{2m}$, the number of $2\ell$-cycles in $G$ is at least $\alpha_{\ell}{n^{2\ell/m}}$, where
\begin{equation*}
\alpha_\ell=\dfrac{9}{2\ell{c_2}}\left(\dfrac{c_1^{m+1}}{10}\right)^{2\ell-2m+1}>0.
\end{equation*}
\end{prop}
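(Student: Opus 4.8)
The plan is to fix a vertex $x$ and count cycles of length $2\ell = 2m + k + 1$ that pass through $x$, using the decomposition set up in Section \ref{generic case}. Here $k = 2\ell - 2m - 1$ is odd and satisfies $1 \leq k \leq 2m - 1$ precisely when $m+1 \leq \ell \leq 2m$, so Lemma \ref{kpath} applies. For each such $x$, I would first invoke Lemma \ref{kpath} to lower-bound the number of $k$-paths $P = (w_0, \ldots, w_k)$ in $G_x$ with $w_0 \in \Gamma^m_G(x)$ and $w_k \in \Gamma^{m+1}_G(x)$. Then, for each such path, I would count the neighbours $y \in \Gamma^1_{G_x}(w_k)$ that produce a genuine $2\ell$-cycle. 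By the discussion following Lemma \ref{disjointpath}, every neighbour $y$ of $w_k$ yields a cycle of length exactly $2m + k + 1$ via the four paths $P^x_{w_0}$, $P$, the edge $(w_k, y)$, and $P^x_y$, with at most two exceptions ($y = w_{k-1}$ and the single bad vertex $y = u$ coming from Lemma \ref{disjointpath}).

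The counting then proceeds as follows. First I would establish that $w_k$ has degree at least $\frac{c_1^{m+1}}{10} n^{1/m}$ in $H_x$ (since $w_k \in B_x$), so after discarding the at most two exceptional neighbours, the number of valid choices of $y$ is at least $\frac{c_1^{m+1}}{10} n^{1/m} - 2$, which for large $n$ is at least, say, $\frac{c_1^{m+1}}{20} n^{1/m}$ (or one simply absorbs the lower-order correction). Multiplying the bound from Lemma \ref{kpath} by this number of good neighbours gives a count of (path, neighbour) pairs, hence of cycles through $x$, of order $n^{1 + k/m} \cdot n^{1/m} = n^{1 + (k+1)/m} = n^{2\ell/m}$, matching the target exponent exactly. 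Summing over all $n$ choices of $x$ would overcount, but this is not the issue: the correct bookkeeping is that each $2\ell$-cycle is counted at most $2\ell$ times in total, once for each of its vertices playing the role of $x$, as noted at the start of Section \ref{shortcycleslowerbound}. Dividing the total (path, neighbour) count by $2\ell$ yields the final lower bound $\alpha_\ell n^{2\ell/m}$ with $\alpha_\ell = \frac{9}{2\ell c_2}\left(\frac{c_1^{m+1}}{10}\right)^{2\ell - 2m + 1}$.

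Tracking the constant, the exponent $2\ell - 2m + 1 = k + 1$ on $\frac{c_1^{m+1}}{10}$ matches the stated $\alpha_\ell$, and the factor $\frac{9}{2\ell c_2}$ should emerge from combining the leading constant $\frac{9 c_1^{(m+1)(k+1)}}{10^{k+1} c_2}$ in Lemma \ref{kpath} with the factor $\frac{c_1^{m+1}}{10}$ from the degree of $w_k$ and the division by $2\ell$; I would need to be slightly careful to see that the two exceptional neighbours and any $n$-dependent slack do not corrupt the clean constant, which suggests the intended argument either counts from the degree bound before removing exceptions or folds the exceptions into the threshold $N$.

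The main obstacle I anticipate is ensuring the cycles are counted without unintended multiplicity or degeneracy beyond the two exceptions already identified. In particular, one must verify that distinct (path $P$, neighbour $y$) pairs give distinct cycles, or at least bound the overcounting, and that fixing $x$ as a distinguished vertex does not double-count a single cycle more than the $2\ell$ times already accounted for. The cleanest route is to argue that the data $(x, P, y)$ determines the cycle together with a marked starting vertex and orientation/path-structure, so that each unoriented $2\ell$-cycle arises from at most $2\ell$ such triples; this is where the girth condition $\girth(G) > 2m$ does the essential work, guaranteeing that the four constituent paths are genuinely the unique shortest connections and that the resulting closed walk is a simple cycle of the claimed length.
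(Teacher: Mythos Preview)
Your proposal is correct and follows the paper's approach exactly: for each vertex $x$, combine Lemma~\ref{kpath} (with $k = 2\ell - 2m - 1$) with the minimum-degree bound $\frac{c_1^{m+1}}{10}n^{1/m}$ in $H_x$ to count (path, neighbour) pairs yielding $2\ell$-cycles through $x$, then sum over all $n$ vertices and divide by $2\ell$. The paper's own proof is terser---it multiplies by the full minimum degree of $H_x$ without explicitly subtracting the two exceptional neighbours and simply asserts the at-most-$2\ell$ overcount---so the two caveats you raise are exactly the points the paper leaves implicit rather than any divergence in method.
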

\begin{proof}
Using Lemma \ref{kpath} with $k=2\ell-2m-1$ and the observation above, the number of $2\ell$-cycles in $G$ is at least
\begin{equation*}\begin{split}
&\quad\,\,\dfrac{v(G)}{2\ell}\left(\min_{x\in{V(G)}}\deg(H_x)\right)\left(\min_{x\in{V(G)}}\text{number of }(2\ell-2m-1)\text{-paths in }H_x\right)\\
&\geq\dfrac{n}{2\ell}\left(\dfrac{c_1^{m+1}}{10}n^{1/m}\right)\left(\dfrac{9c_1^{(2\ell-2m)(m+1)}}{10^{2\ell-2m}c_2}n^{(2\ell-m-1)/m}\right)\\
&=\dfrac{9}{2\ell{c_2}}\left(\dfrac{c_1^{m+1}}{10}\right)^{2\ell-2m+1}n^{2\ell/m}.
\end{split}\end{equation*}
\end{proof}
\section{Proof of main theorem}
To count the number of longer cycles, we observe that $H_x$ has all the nice properties we wanted, namely bipartite, almost regular and large girth, we can apply Proposition \ref{smallcycles} to $H_x$ and get many short cycles in $H_x$. From them, we obtain a lot of paths in $H_x$, and each of them corresponds to many longer cycles in $G$ as in Section \ref{countingshortcycles}. These longer cycles give many longer paths in $H_x$ and again, each of these paths corresponds to many even longer cycles in $G_x$. Eventually, we have Theorem \ref{ourthm}.

For simplicity, we will assume that $m$ is even from now on. For odd $m$, one can proceed similarly.

Changing the parameters in Proposition \ref{smallcycles}, the number of $2\ell$-cycles in $H_x$ is at least
\begin{equation*}
\dfrac{9}{2\ell{c_2}}\left(\dfrac{c_1^{(m+1)^2}}{10^{m+2}}\right)^{2\ell-2m+1}n^{2\ell/m},
\end{equation*}
for $2\ell\in{L_0}:=\{3m,3m+2,3m+4,\ldots,4m\}$, and so the number of paths of length $2\ell-m\in\{2m,2m+2,2m+4,\ldots,3m\}$ in $H_x$ is at least
\begin{equation*}
\dfrac{9}{c_2}\left(\dfrac{c_1^{(m+1)^2}}{10^{m+2}}\right)^{2\ell-2m+1}n^{2\ell/m}.
\end{equation*}
Then, for $2\ell\in{L_0}$, the number of $((2\ell-m)+2m+2)$-cycles in $G$ is at least
\begin{equation*}\begin{split}
&\quad\,\dfrac{n}{2\ell+m+2}\dfrac{c_1^{2(m+1)}}{100}n^{2/m}\dfrac{9}{c_2}\left(\dfrac{c_1^{(m+1)^2}}{10^{m+2}}\right)^{2\ell-2m+1}n^{2\ell/m}\\
&=\dfrac{9}{(2\ell+m+2)c_2}\dfrac{c_1^{2(m+1)}}{100}\left(\dfrac{c_1^{(m+1)^2}}{10^{m+2}}\right)^{2\ell-2m+1}n^{(2\ell+m+2)/m},
\end{split}\end{equation*}
or for $2\ell\in{L_1}:=\{4m+2,4m+4,\ldots,5m+2\}$, the number of $2\ell$-cycles in $G$ is at least $\alpha_\ell{n^{2\ell/m}}$, where $\alpha_\ell$ is a positive constant depending on $m,c_1,c_2, \ell$ only. Repeating the same argument with the sets $L_j:=\{3m+j(m+2),3m+j(m+2)+2,\ldots,4m+j(m+2)\}$, the proof of Theorem \ref{ourthm} is completed.

\end{document}